\documentclass[12pt]{article}
\usepackage{graphicx} 
\usepackage{amsmath, amssymb, amsthm, mathtools, comment}
\usepackage{mathrsfs}
\usepackage{mathabx}
\usepackage[letterpaper,top=3cm,bottom=3cm,left=3cm,right=3cm,marginparwidth=1.75cm]{geometry}
\usepackage[colorlinks=true, allcolors=blue]{hyperref}
\usepackage{bbm}
\usepackage{xcolor}
\usepackage[most]{tcolorbox}

\newtheorem{theorem}{Theorem}[section]
\newtheorem{proposition}[theorem]{Proposition}
\newtheorem{lemma}[theorem]{Lemma}
\newtheorem{definition}[theorem]{Definition}

\newtheorem*{fact*}{Fact}

\newtheorem*{ex*}{Example}

\newtheorem{rem}[theorem]{Remark}

\newtcbtheorem{question}{Question}%
{colback=green!5,colframe=green!35!black,fonttitle=\bfseries}{th}

\newtheorem*{remark}{Remark}

\title{\textsc{Small moments of the sensitivity of polynomial threshold functions}}
\author{\textsc{Chun-Kai Tseng}, \textsc{Alexander Volberg}}
\date{\today}

\begin{document}

\maketitle


\begin{abstract}

In the first version of Chang, Slote, Volberg, and Zhang's paper \cite{BSA_of_PTF}, the authors modify a nice recursive approach due to Kane in \cite{Correct_exponent_for_AS} where he bounded the average sensitivity of polynomial threshold functions. In \cite{BSA_of_PTF} Kane's argument was adopted to estimate the boolean surface area of  polynomial threshold function. The bridge is a combinatorial averaging lemma considering all balanced partitions. The lemma serves as a substitute for an additive property of average sensitivity. With the lemma, one can apply a Kane-type algorithm to derive a recurrence. Solving the recurrence then gives an upper bound of $e^{C_d \sqrt{\log n}}$ for the boolean surface area. 

In the second version of the same paper, the authors derive a polylog upper bound for BSA of PTFs. The difference is that they use a tail estimate for the sensitivity function. With the help of a polynomial restriction lemma in \cite{poly_restriction} they sharpen the upper bound. It is noteworthy that when applying the polynomial restriction, each coordinate is put into each part independently with equal probability. As a result, a partition does not necessarily have equal-size blocks. In other words, it may not be balanced.

In this note, we first investigate the effect of different partitioning. Second, we use the recursive method in the first version to derive a polylog upper bound for $\mathbb E[s(x)^{\eta}]$ where $\eta < 1/2$. It is interesting to note the phase transition that happens at $\eta=1/2$ in both versions of the proof (but in a completely different  form). Section \ref{PhaseTr-s} treats this.

\end{abstract}

\section{Proving Tail Estimate Using Balanced Partitioning}
\label{balance}

In this section, we prove the same tail estimate in the second version of \cite{BSA_of_PTF} using balanced partitioning. In fact, there are not many differences.

\begin{proposition}[The Same Tail Estimate: Balanced-block Partition inside the proof]
Let \(f : \{-1,1\}^n \to \{-1,1\}\) be a degree-\(d\) PTF. Assume \(n \ge 256\), and let \(m\) be an integer satisfying
\[
16 \le m \le n,
\qquad m \mid n.
\]
Then
\[
\Pr[s_f(x)\ge m]
\le
\frac{8(\log(en))^{2K_d}}{\sqrt m},
\]
where $K_d$ is a constant depending on $d$.
\end{proposition}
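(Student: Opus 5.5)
We will follow the strategy of the second version of \cite{BSA_of_PTF}, with the random (independent) assignment of coordinates to blocks replaced by a uniformly random \emph{balanced} partition. Fix $m\mid n$ and let $\pi=(B_1,\dots,B_m)$ be a uniformly random partition of $[n]$ into $m$ blocks, each of size $n/m$. For $x\in\{-1,1\}^n$ and a sign vector $y\in\{-1,1\}^m$, let $x^{\pi,y}$ be the point whose coordinates in $B_j$ are $y_j\,x_i$. For fixed $(x,\pi)$, the function $g_{x,\pi}(y)=f(x^{\pi,y})$ is a degree-$d$ PTF on $\{-1,1\}^m$. The guiding identity is that if $x$ is uniform and independent of $y$, then $x^{\pi,y}$ is again uniform. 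Hence
\[
\Pr_x[s_f(x)\ge m]=\Pr_{x,\pi,y}\bigl[s_f(x^{\pi,y})\ge m\bigr].
\]
The task is to relate the event $\{s_f(z)\ge m\}$ at $z=x^{\pi,y}$ to the sensitivity of $g_{x,\pi}$ at $y$.

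\textbf{Step 1 (combinatorial averaging).} Suppose $z$ has a sensitive set $S$ with $|S|\ge m$. We claim that with probability at least a constant $c>0$ over the random balanced partition, at least $c\,m$ blocks $B_j$ contain exactly one element of $S$. This is the step where balance enters, replacing the independent-placement (Poisson/binomial) computation of version two. The count of singleton-hit blocks is a hypergeometric-type occupancy statistic. Its expectation is at least $|S|(1-1/m)^{|S|-1}\cdot(\text{correction})$, which is of order $m$ when $|S|\approx m$; one first restricts to a subset of $S$ of size exactly $m$. A second-moment bound (the occupancy variables are negatively correlated under sampling without replacement) then gives concentration. Because $m\ge16$ and $n\ge256$, the constants stay explicit.

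\textbf{Step 2 (from a singleton block to a restricted polynomial).} A block $B_j$ meeting $S$ in a single point $i$ is not automatically sensitive for $g$: flipping $y_j$ flips every coordinate of $B_j$, not only $i$. Here we invoke the polynomial restriction lemma of \cite{poly_restriction}, exactly as in version two. With high probability over $\pi$, the restricted polynomial $p(x^{\pi,\cdot})$ keeps its anticoncentration/regularity, up to $(\log(en))^{K_d}$ losses. This lets us transfer the sensitivity of $z$ in direction $i$ to sensitivity of $g$ in direction $j$ with probability at least $(\log(en))^{-K_d}$. Each application of the lemma costs one factor $(\log(en))^{K_d}$, and it is applied twice, which is where $(\log(en))^{2K_d}$ comes from. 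The conclusion is
\[
\Pr[s_f\ge m]\le C(\log(en))^{2K_d}\,\Pr_{x,\pi,y}\bigl[s_{g_{x,\pi}}(y)\ge c' m\bigr].
\]

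\textbf{Step 3 (closing).} On $\{-1,1\}^m$ we bound $\Pr[s_g(y)\ge c'm]$ with no recursion. The crude estimate $\Pr[s_g\ge c'm]\le \mathbb E[s_g]/(c'm)$ combined with a Gotsman–Linial-type bound $\mathbb{E}[s_g]\lesssim_d\sqrt m$ would introduce a $d$-dependence that we want to avoid. We therefore use the cruder but uniform bound: a degree-$d$ PTF on $m$ bits is sensitive in a linear number of directions only on a $O(1/\sqrt m)$ fraction of points. This should follow from the same restriction argument at scale $m$, or from the estimate stated in version two. Collecting constants gives the factor $8/\sqrt m$.

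\textbf{Main obstacle.} We expect Step 2 to be the hardest part, namely verifying that the polynomial restriction lemma, which was proved for independent block assignment, still holds for balanced partitions. Our first approach is a coupling: condition the independent assignment on the event that all blocks have size $n/m$. This event has probability at least $n^{-O(m)}$, which is too small, so instead we redo the relevant moment (hypercontractive) computation directly. Under sampling without replacement, the block-indicator moments are dominated by those of the independent model (negative association). We therefore expect the lemma's proof to go through line by line with the same constants.
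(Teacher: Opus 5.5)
There is a genuine gap. It sits in Step~2, and Step~3 makes it worse.

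\textbf{Step~2 has no valid mechanism.} In your block-flip projection $g_{x,\pi}(y)=f(x^{\pi,y})$, flipping $y_j$ moves $z$ to $z^{\oplus B_j}$. The value $f(z^{\oplus B_j})$ is not controlled by $f(z^{\oplus i})$ for $i\in B_j\cap S$, and majority already defeats the proposed transfer. Take $f=\mathrm{sgn}(\sum_i z_i)$ at a point with $\sum_i z_i=1$ ($n$ odd). The sensitive coordinates are exactly the $+1$'s. A block whose \emph{only} sensitive coordinate is $i$ has block sum $2-q$, so flipping it moves the total to $2q-3>0$. Hence for $q=n/m\ge2$ such a block is never flip-sensitive.

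The restriction lemma does not rescue this. What it provides, and all the paper uses, is the following: for any \emph{fixed} partition into $m$ blocks, leave one uniformly random block live and set all other coordinates uniformly at random. Then $f_\rho$ is $\delta$-close to a constant except with probability $(1/\sqrt m+\delta)(\log m\cdot\log(1/\delta))^{K_d}$. Nothing of this form turns a sensitive coordinate into a flip-sensitive block. So the transfer probability $(\log(en))^{-K_d}$ and the ``applied twice'' explanation of $2K_d$ are unsupported.

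\textbf{Step~3 is circular.} The bound $\Pr[s_g\ge c'm]\lesssim 1/\sqrt m$ for a degree-$d$ PTF on $m$ bits is the proposition itself at scale $m$, and you have excluded recursion. The only non-circular route is Markov's inequality plus Kane's $\sqrt m\,(\log m)^{O(d\log d)}$ average-sensitivity bound. A bound $\mathbb E[s_g]\lesssim_d\sqrt m$, as you state it, is not known for $d\ge2$.

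\textbf{The missing idea is to keep the random block \emph{live} instead of flipping it.}
\begin{itemize}
\item Take $x$ uniform, $\Pi$ a uniform balanced partition, and $L\in[m]$ uniform. Freeze every coordinate outside $G_L$ at $x$.
\item Each $i\in G_L\cap S(x)$ is then literally a sensitive coordinate of $f_\rho$ at $x_{G_L}$.
\item Since $|G_L|=n/m$ and $|S(x)|\ge m$, we get $\Pr[G_L\cap S(x)=\varnothing]\le(1-m/n)^{n/m}\le e^{-1}$. This is the only place balance is used, and it gives $\Pr[s_{f_\rho}(x_{G_L})\ge1]\ge\frac12\Pr[s_f\ge m]$.
\item You now need only sensitivity $\ge1$ of the restriction, not linear sensitivity. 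If $f_\rho$ is $\delta$-close to a constant, a union bound over $y$ and its $n/m$ neighbours gives $\Pr_y[s_{f_\rho}(y)\ge1]\le(n/m+1)\delta$. Otherwise you pay the restriction lemma's failure probability.
\item Choosing $\delta=\sqrt m/n$ finishes the proof. The exponent $2K_d$ comes from $(\log m\cdot\log(1/\delta))^{K_d}\le(\log(en))^{2K_d}$.
\end{itemize}

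Your ``main obstacle'' also disappears. Conditionally on $\Pi$, the pair $(x,L)$ produces exactly the block restriction for the fixed partition $\Pi$. The lemma holds for every fixed partition, so you simply average over $\Pi$, and no negative-association re-proof is needed.
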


\begin{proof}
Fix \(m \in \{16,\dots,n\}\) with \(m \mid n\). Sample
\(x \in \{-1,1\}^n\) uniformly at random. Independently, sample a uniformly random
partition $\Pi$
\[
[n] = G_1 \sqcup \cdots \sqcup G_m, \ \Pi=(G_1, \cdots, G_m)
\]
into \(m\) blocks of equal size
\[
|G_\ell| = \frac{n}{m}.
\]
Then choose \(L\) uniformly from \([m]\).

From \((x,\Pi,L)\), form the restriction \(\rho_{x,\Pi,L}\) by
\[
(\rho_{x,\Pi,L})_i
=
\begin{cases}
\ast, & i\in G_L,\\
x_i, & i\notin G_L.
\end{cases}
\]
Thus the live coordinates are precisely those belonging to the random block \(G_L\).

Define the events
\[
E := \{x : s_f(x)\ge m\},
\]
and
\[
B :=
\left\{
(x,\Pi,L):
s_{f_{\rho_{x,\Pi,L}}}(x_{G_L})\ge1
\right\}.
\]

Fix \(x\in E\), and define the sensitive set
\[
S(x)
:=
\{i\in[n]: f(x)\neq f(x^{\oplus i})\}.
\]
Then
\[
|S(x)|=s_f(x)\ge m.
\]

We claim that
\[
\Pr_{\Pi,L}[G_L\cap S(x)\neq\varnothing]
\ge
1-\frac1e.
\]
Indeed, since \(G_L\) is a uniformly random subset of \([n]\) of cardinality \(n/m\),
\[
\Pr_{\Pi,L}[G_L\cap S(x)=\varnothing]
=
\frac{
\binom{n-|S(x)|}{n/m}
}{
\binom{n}{n/m}
}.
\]
Since \(|S(x)|\ge m\),
\[
\Pr_{\Pi,L}[G_L\cap S(x)=\varnothing]
\le
\frac{
\binom{n-m}{n/m}
}{
\binom{n}{n/m}
}.
\]
Expanding the ratio,
\[
\frac{
\binom{n-m}{n/m}
}{
\binom{n}{n/m}
}
=
\prod_{j=0}^{n/m-1}
\left(
1-\frac{m}{n-j}
\right)
\le
\left(1-\frac{m}{n}\right)^{n/m}
\le
e^{-1}.
\]
Hence
\[
\Pr_{\Pi,L}[G_L\cap S(x)\neq\varnothing]
\ge
1-\frac1e
>
\frac12.
\]

Now suppose
\[
i\in G_L\cap S(x).
\]
Since \(i\) remains live under the restriction \(\rho_{x,\Pi,L}\), the point
\(x_{G_L}\) in the restricted cube corresponds to the original point \(x\), and flipping
the live coordinate \(i\) sends \(x_{G_L}\) to the restricted point corresponding to
\(x^{\oplus i}\), while all coordinates outside \(G_L\) remain fixed. Therefore
\[
f_{\rho_{x,\Pi,L}}(x_{G_L})
=
f(x)
\neq
f(x^{\oplus i})
=
f_{\rho_{x,\Pi,L}}
\bigl((x_{G_L})^{\oplus i}\bigr),
\]
so
\[
s_{f_{\rho_{x,\Pi,L}}}(x_{G_L})\ge1.
\]
Consequently,
\[
\Pr_{\Pi,L}[B\mid x]
\ge
\Pr_{\Pi,L}[G_L\cap S(x)\neq\varnothing]
>
\frac12, \quad x\in E\,.
\]

Averaging over \(x\in E\),
\[
\Pr[B]
=
\mathbb E_x
\Pr_{\Pi,L}[B\mid x]
\ge
\frac12
\Pr[E].
\]

Condition on a restriction \(\rho=\rho_{x,\Pi,L}\). The restricted function
\(f_\rho\) lives on exactly
\[
\ell=\frac{n}{m}
\]
variables. If \(f_\rho\) is \(\delta\)-close to a constant, then by the same argument as in the original proof we have
\[
\Pr[B\mid \rho]
=
\Pr_{y\in\{-1,1\}^{\ell}}
[s_{f_\rho}(y)\ge1]
\le
(\ell+1)\delta
=
\left(\frac{n}{m}+1\right)\delta.
\]
Hence
\[
\Pr[B]
\le
\Pr_{\rho}
[f_\rho \text{ is not }\delta\text{-close to a constant}]
+
\delta\left(\frac{n}{m}+1\right).
\]

We now apply the block restriction lemma. First fix the balanced partition
\[
\Pi=(G_1,\dots,G_m).
\]
Conditioned on this \(\Pi\), the following procedure is exactly the random block
restriction associated with the partition \(\Pi\): choose \(L\in[m]\) uniformly, leave
all coordinates in \(G_L\) free, and assign every coordinate outside \(G_L\) an
independent uniform sign.

Indeed, in our construction the outside coordinates are fixed according to the random
point \(x\). Since \(x\) is uniformly distributed on \(\{-1,1\}^n\), the vector
\[
x_{[n]\setminus G_L}
\]
is an independent uniform assignment to the variables outside \(G_L\). Therefore,
conditional on \(\Pi\), the restriction \(\rho_{x,\Pi,L}\) has exactly the distribution
of a block restriction with respect to the partition \(\Pi\).

Hence, for every fixed balanced partition \(\Pi\), the block restriction lemma gives
\[
\Pr_{x,L}
\left[
f_{\rho_{x,\Pi,L}}
\text{ is not }\delta\text{-close to a constant}
\,\middle|\, \Pi
\right]
\le
\left(
\frac1{\sqrt m}
+
\delta
\right)
\left(
\log m\cdot \log\frac1\delta
\right)^{K_d}.
\]
Here the probability is only over \(x\) and \(L\), with \(\Pi\) fixed.

Since the right-hand side is independent of the particular partition \(\Pi\), we may
average over the random choice of \(\Pi\). Thus
\[
\Pr_{x,\Pi,L}
\left[
f_{\rho_{x,\Pi,L}}
\text{ is not }\delta\text{-close to a constant}
\right]
=
\mathbb E_\Pi
\Pr_{x,L}
\left[
f_{\rho_{x,\Pi,L}}
\text{ is not }\delta\text{-close to a constant}
\,\middle|\, \Pi
\right]
\]
\[
\le
\left(
\frac1{\sqrt m}
+
\delta
\right)
\left(
\log m\cdot \log\frac1\delta
\right)^{K_d}.
\]
Substituting \(\delta=\sqrt m/n\),
\[
\Pr[B]
\le
\left(
\frac1{\sqrt m}
+
\frac{\sqrt m}{n}
\right)
\left(
\log m\cdot \log\frac{n}{\sqrt m}
\right)^{K_d}
+
\frac1{\sqrt m}
+
\frac{\sqrt m}{n}.
\]
Since \(m\le n\),
\[
\frac{\sqrt m}{n}\le\frac1{\sqrt m},
\]
and therefore
\[
\Pr[B]
\le
\frac{4}{\sqrt m}
(\log(en))^{2K_d}.
\]
Finally, since
\[
\Pr[E]\le2\Pr[B],
\]
we conclude
\[
\Pr[s_f(x)\ge m]
=
\Pr[E]
\le
\frac{8(\log(en))^{2K_d}}{\sqrt m}.
\]
This completes the proof.
\end{proof}

\section{Bounding Lower Moment Using Recursive Method}
\label{lowerMoments}

In this section, we give $\mathbb E[s_f (x)^\eta]$ a polylog bound in terms of $n$, where $f\colon \{-1,1\}^n \to \{-1,1\}$ is a polynomial threshold function. For simplicity, we focus on $\eta = 1/4$ since the same mechanism applies to other numbers strictly smaller than $1/2$. Also, we always assume that $n$ is divisible by the number of parts in the partition.

\subsection{Balanced Block Averaging Lemma}

\begin{lemma}[Block averaging for the \(1/4\)-moment] 
Assume \(n=bq\). Let \(y_1,\dots,y_n\in\{0,1\}\), and write
\[
S:=\sum_{j=1}^n y_j.
\]
Let \(\Pi=(G_1,\dots,G_b)\) be a uniformly random partition of \([n]\) into \(b\)
blocks of equal size \(q\). Define
\[
A:=S^{1/4}
\]
and
\[
B:=
\frac{1}{b^{3/4}}
\mathbb E_\Pi
\sum_{\ell=1}^b
\left(\sum_{j\in G_\ell} y_j\right)^{1/4}.
\]
Then
\[
B\le A\le B+C b^{1/4},
\]
where \(C>0\) is an absolute constant.
\end{lemma}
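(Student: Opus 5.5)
The plan is to reduce both inequalities to statements about a single block. By symmetry of the uniformly random balanced partition, every block \(G_\ell\) is marginally a uniformly random \(q\)-subset of \([n]\). Writing \(S_\ell:=\sum_{j\in G_\ell}y_j\), we therefore get
\[
B=\frac{1}{b^{3/4}}\, b\,\mathbb E\, S_1^{1/4}=b^{1/4}\,\mathbb E\, S_1^{1/4},
\qquad
A=b^{1/4}\Bigl(\frac{S}{b}\Bigr)^{1/4}.
\]
Here \(S_1\) is hypergeometric: it counts the ones among \(q\) draws without replacement from a population of size \(n\) containing \(S\) ones. Its mean is \(\mu:=qS/n=S/b\).

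For the lower bound \(B\le A\), no probability is needed. Fix any partition. By concavity of \(t\mapsto t^{1/4}\) (Jensen, or equivalently H\"older),
\[
\frac1b\sum_{\ell=1}^b S_\ell^{1/4}\le\Bigl(\frac1b\sum_\ell S_\ell\Bigr)^{1/4}=\Bigl(\frac Sb\Bigr)^{1/4},
\]
so \(\sum_\ell S_\ell^{1/4}\le b^{3/4}S^{1/4}\). Averaging over \(\Pi\) and dividing by \(b^{3/4}\) gives \(B\le A\).

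For the upper bound, it suffices to show \(\mu^{1/4}-\mathbb E\,S_1^{1/4}\le C\) with an absolute constant \(C\); multiplying by \(b^{1/4}\) then yields \(A-B\le Cb^{1/4}\). I will split into two cases.

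\emph{Case \(\mu\le1\).} The difference is at most \(\mu^{1/4}\le 1\).

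\emph{Case \(\mu>1\).} I will use the elementary inequality \(a^{1/4}-t^{1/4}\le |a-t|/a^{3/4}\), valid for \(a>0\) and \(t\ge0\). When \(t\le a\), it follows from the factorization
\[
a^{1/4}-t^{1/4}=\frac{a-t}{(a^{1/4}+t^{1/4})(a^{1/2}+t^{1/2})},
\]
whose denominator is at least \(a^{3/4}\). When \(t>a\), the left side is negative. Taking \(a=\mu\), \(t=S_1\), and applying Cauchy--Schwarz,
\[
\mu^{1/4}-\mathbb E\,S_1^{1/4}\le\frac{\mathbb E|S_1-\mu|}{\mu^{3/4}}\le\frac{\sqrt{\operatorname{Var}S_1}}{\mu^{3/4}}.
\]
The hypergeometric variance is \(q\frac Sn(1-\frac Sn)\frac{n-q}{n-1}\le \mu\). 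Hence the bound is at most \(\mu^{-1/4}<1\).

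So \(C=1\) works. The only step requiring care is the variance estimate for sampling without replacement, which is standard because the finite-population correction factor is at most \(1\). I do not expect a genuine obstacle. The point to emphasize is that the loss is uniform in \(\mu\) because the exponent \(1/4\) is below \(1/2\): the fluctuation \(\sqrt\mu\) divided by \(\mu^{3/4}\) stays bounded. This is exactly the phase transition at \(\eta=1/2\) that the paper highlights.
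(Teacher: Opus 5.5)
Your proof is correct, and its skeleton matches the paper's. Both use Jensen for $B\le A$, the reduction $B=b^{1/4}\mathbb E S_1^{1/4}$ with $S_1$ hypergeometric of mean $\mu=S/b$, the bound $\operatorname{Var}S_1\le\mu$, and a split at $\mu=1$ (the paper's $S<b$ versus $S\ge b$).

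The difference is the estimate for $\mu>1$. The paper uses the second-order pointwise bound $t^{1/4}\ge 1+\frac14(t-1)-C(t-1)^2$ at $t=S_1/\mu$. The linear term then averages out, giving $\mu^{1/4}-\mathbb E S_1^{1/4}\le C\mu^{-7/4}\operatorname{Var}S_1\le C\mu^{-3/4}$. You instead use the first-order bound $(t/a)^{1/4}\ge t/a$ (your factorization) together with Cauchy--Schwarz, and get $\mu^{-1/4}$.

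Your route is fully self-contained, since the paper states its quadratic pointwise inequality without proof, and it gives the explicit constant $C=1$. The paper's route gives the sharper intermediate bound $A-B\le Cb/S^{3/4}$. More importantly, it extends to every exponent: for $t^\eta$ it yields $\mu^\eta-\mathbb E S_1^\eta\le C\mu^{\eta-1}$, bounded uniformly in $\mu\ge1$ for all $\eta\le 1$. Your method yields $\mu^{\eta-1/2}$, which is bounded only for $\eta\le 1/2$.

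This is why your closing remark is misplaced: the phase transition at $\eta=1/2$ does not live in this lemma. Even your bound is still uniform at $\eta=1/2$, consistent with the $Cb^{1/2}$ error in the paper's remark. In the paper the transition comes from the recursion. There the normalization factor $b^{\eta}$ meets $\mathbb E\aleph\lesssim ab^{-1/2}$ and leaves no contraction in $b$ at $\eta=1/2$. The sharp additive error $Cb^{1/2}$ then rules out compensating by taking $b=n^c$.
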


\begin{proof}
For a fixed partition \(\Pi\), put
\[
S_\ell:=\sum_{j\in G_\ell}y_j.
\]
Then
\[
\sum_{\ell=1}^b S_\ell=S.
\]
Since \(t\mapsto t^{1/4}\) is concave, Jensen gives
\[
\frac1b\sum_{\ell=1}^b S_\ell^{1/4}
\le
\left(\frac1b\sum_{\ell=1}^b S_\ell\right)^{1/4}
=
\left(\frac Sb\right)^{1/4}.
\]
Multiplying by \(b^{1/4}\), we get
\[
\frac1{b^{3/4}}\sum_{\ell=1}^b S_\ell^{1/4}
\le
S^{1/4}.
\]
Averaging over \(\Pi\) proves
\[
B\le A.
\]

We now prove the reverse estimate. By symmetry, for each fixed \(\ell\), $S_\ell$ has the same hypergeometric distribution
\[
X\sim Hg(n,S,q).
\]
Therefore
\[
B
=
b^{1/4}\mathbb E X^{1/4}.
\]
Also
\[
\mathbb E X=\frac{qS}{n}=\frac Sb.
\]
Set
\[
\mu:=\mathbb E X=\frac Sb.
\]

We shall use the elementary estimate
\[
\mathbb E X^{1/4}
\ge
\mu^{1/4}
-
C\mu^{-7/4}\text{Var}(X),
\]
valid for any nonnegative random variable \(X\) with \(\mu=\mathbb E X>0\).
Indeed, the inequality follows from the pointwise bound
\[
t^{1/4}
\ge
1+\frac14(t-1)-C(t-1)^2,
\qquad t\ge0,
\]
applied to \(t=X/\mu\) and then averaged.

Hence
\[
A-B
=
S^{1/4}-b^{1/4}\mathbb E X^{1/4}
=
b^{1/4}\left(\mu^{1/4}-\mathbb E X^{1/4}\right)
\le
C b^{1/4}\mu^{-7/4}\text{Var}(X).
\]
For a hypergeometric random variable,
\[
\text{Var}(X)
=
\frac{qS(n-S)(n-q)}{n^2(n-1)}
\le
\frac{qS}{n}
=
\mu.
\]
Therefore
\[
A-B
\le
C b^{1/4}\mu^{-3/4}
=
C b^{1/4}\left(\frac Sb\right)^{-3/4}
=
C\frac{b}{S^{3/4}}.
\]

If \(S\ge b\), then
\[
\frac{b}{S^{3/4}}\le b^{1/4},
\]
so
\[
A-B\le Cb^{1/4}.
\]
If \(S<b\), then simply
\[
A=S^{1/4}<b^{1/4}\le B+b^{1/4}.
\]
Thus in all cases,
\[
A\le B+Cb^{1/4}.
\]
This proves the lemma.
\end{proof}

\begin{rem} \label{Block averaging is sharp}
If one applies the same block-averaging argument to the \(1/2\)-moment,
which corresponds to the BSA case, then one obtains
\[
B \le A \le B+C b^{1/2},
\]
where
\[
A:=S^{1/2},
\qquad
B:=
\frac{1}{b^{1/2}}
\mathbb E_\Pi
\sum_{\ell=1}^b
\left(\sum_{j\in G_\ell} y_j\right)^{1/2}.
\]
Compared with Proposition~3.1 in \cite{BSA_of_PTF}, where the error term is
of order \(b\), this gives the smaller error \(C b^{1/2}\).

Moreover, the powers \(b^{1/2}\) in the BSA case and \(b^{1/4}\) in the
\(1/4\)-moment case are sharp. We illustrate this for the \(1/4\)-moment.
Take
\[
S=q=b,
\qquad
n=b^2.
\]
Recall that
\[
S_\ell:=\sum_{j\in G_\ell} y_j .
\]
Then, for each fixed \(\ell\),
\[
S_\ell \sim \operatorname{Hyp}(b^2,b,b).
\]
Let \(Y\sim \operatorname{Pois}(1)\). Since
\[
\operatorname{Hg}(b^2,b,b)\Longrightarrow \operatorname{Pois}(1)
\]
as \(b\to\infty\), we have
\[
\mathbb E S_\ell^{1/4}\longrightarrow \mathbb E Y^{1/4}.
\]
By the strict concavity of \(t\mapsto t^{1/4}\),
\[
0<\mathbb E Y^{1/4}<(\mathbb E Y)^{1/4}=1.
\]
Therefore,
\[
A-B
=
b^{1/4}
-
b^{1/4}\mathbb E S_\ell^{1/4}
=
b^{1/4}\left(1-\mathbb E S_\ell^{1/4}\right)
\sim
b^{1/4}\left(1-\mathbb E Y^{1/4}\right).
\]
Hence the error is bounded from below by \(c b^{1/4}\) for some absolute
constant \(c>0\) along this sequence of examples. The same construction also
shows that the \(b^{1/2}\) error is sharp in the BSA case, and more generally
gives the corresponding \(b^\eta\) lower bound for every \(0<\eta<1\).
\end{rem}

\subsection{Building the Recurrence}

For convenience, we introduce the following definitions:
\begin{definition}
\[
M_{1/4}[f]:=\mathbb E_x [s_f(x)^{1/4}].
\]
For \(a>0\), define
\[
M_{1/4}(d,n,a)
\]
to be the supremum of \(M_{1/4}[\text{sgn}(p)]\) over all degree-\(d\) polynomials
\(p\) on \(\{-1,1\}^n\) satisfying
\[
\alpha(p)\le a.
\]
Similarly, define \(MR_{1/4}(d,n,a,\tau)\) by adding the assumption that \(p\)
is \(\tau\)-regular.
\end{definition}

The analogue of the block estimate for \(BSA\) is
\[
M_{1/4}[f]
\le
\frac1{b^{3/4}}
\mathbb E_\Pi
\sum_{\ell=1}^b
\mathbb E_{A_\ell}
M_{1/4}[f_{A_\ell}]
+
C b^{1/4}.
\]
Indeed, this follows from applying the combinatorial estimate 
\[
\left(\sum_{i=1}^n y_i\right)^{1/4}
\le
\frac1{b^{3/4}}
\mathbb E_\Pi
\sum_{\ell=1}^b
\left(\sum_{i\in G_\ell} y_i\right)^{1/4}
+
C b^{1/4},
\]
applied to
\[
y_i(x)=\mathbf 1_{\{f(x)\neq f(x^{\oplus i})\}}
\]
at each point $x\in \{-1,1\}^n$.

Therefore, arguing exactly as in \cite{BSA_of_PTF}, we obtain \begin{equation} \label{Regular->Smaller scale} MR_{1/4}(d,n,a,\tau) \le C b^{1/4} + b^{1/4} \operatorname*{sup}_{\aleph} \mathbb E_{\aleph} M_{1/4}\bigl(d,n/b,\aleph\bigr), \end{equation} where the supremum is taken over all admissible non-negative random variables \(\aleph\) taking only finitely many values and satisfying 
\[ 
\mathbb E\aleph \le C_d\left(a b^{-1/2}+\tau^{1/(8d)}\right). 
\] 
Next, the regularization step gives \begin{equation} \label{normal->regular} M_{1/4}(d,n,a) 
\le 
D^{1/4} + 3(n\varepsilon)^{1/4} + \operatorname*{sup}_{\tilde\aleph} \mathbb E_{\tilde \aleph} MR_{1/4}(d,n,\tilde \aleph,\tau), \end{equation} 
where 
\[ 
D=\tau^{-1} \left(d\log(1/\tau)\log(1/\varepsilon)\right)^{O(d)} 
\] 
and where the supremum is taken over all admissible non-negative random variables \(\tilde\aleph\) taking only finitely many values and satisfying 
\begin{equation} \label{lea} 
\mathbb E\tilde \aleph\le a. \end{equation} 
Combining \eqref{Regular->Smaller scale} and \eqref{normal->regular}, we get \begin{equation} \label{4} 
M_{1/4}(d,n,a) \le D^{1/4} + 3(n\varepsilon)^{1/4} + C b^{1/4} + b^{1/4} \operatorname*{sup}_{\aleph} \mathbb E_{\aleph} M_{1/4}\bigl(d,n/b,\aleph\bigr), \end{equation} 
where 
\begin{equation} \label{Etaleph} 
\mathbb E\aleph \le C_d\left(a b^{-1/2}+\tau^{1/(8d)}\right). \end{equation} 

Let us explain the last passage. Combining \eqref{Regular->Smaller scale} and \eqref{normal->regular}, the main term is initially of the form \[ \operatorname*{sup}_{\tilde\aleph} \mathbb E_{\tilde\aleph} \left[ \operatorname*{sup}_{\aleph=\aleph(\tilde\aleph)} \mathbb E_{\aleph} M_{1/4}\bigl(d,n/b,\aleph\bigr) \right], \] where the outer supremum is taken over all non-negative random variables \(\tilde\aleph\) taking finitely many values and satisfying \[ \mathbb E\tilde\aleph\le a, \] and, for each realized value of \(\tilde\aleph\), the inner supremum is taken over all non-negative random variables \(\aleph=\aleph(\tilde\aleph)\) taking finitely many values and satisfying \[ \mathbb E[\aleph\mid \tilde\aleph] \le C_d\left(\tilde\aleph b^{-1/2}+\tau^{1/(8d)}\right). \] Strictly speaking, the inner supremum need not be attained. However, since \(\tilde\aleph\) takes finitely many values, the expectation over \(\tilde\aleph\) is a finite sum over its possible values. Hence, by the usual \(\varepsilon\)-principle, for each possible value of \(\tilde\aleph\) we may choose an \(\gamma\)-optimal random variable \(\aleph_0=\aleph_0(\tilde\aleph)\). Thus, up to an arbitrarily small \(\gamma\)-loss, we may write \[ \begin{aligned} 
& \operatorname*{sup}_{\tilde\aleph} \mathbb E_{\tilde\aleph} \left[ \operatorname*{sup}_{\aleph=\aleph(\tilde\aleph)} \mathbb E_{\aleph} M_{1/4}\bigl(d,n/b,\aleph\bigr) \right] \\ 
&\le \operatorname*{sup}_{\tilde\aleph} \left\{  \mathbb E_{\tilde\aleph} \left[ \mathbb E_{\aleph_0( \tilde\aleph)} 
\left[ M_{1/4}\bigl(d,n/b,\aleph_0\bigr) \right] \right]\right\} +\gamma. \end{aligned} \] 
After making these choices, we regard the two-step randomness as producing one single non-negative random variable taking finitely many values, still denoted by \(\aleph_0\). Its expectation satisfies \[ \mathbb E\aleph_0 = \mathbb E_{\tilde\aleph} \mathbb E[\aleph_0 \mid \tilde\aleph] \le C_d\left( b^{-1/2}\mathbb E\tilde\aleph+\tau^{1/(8d)} \right) \le C_d\left( a b^{-1/2}+\tau^{1/(8d)} \right). \] 
Therefore, \[ \begin{aligned} & \operatorname*{sup}_{\tilde\aleph} \left\{ \mathbb E_{\tilde\aleph} \left[ \mathbb E_{\aleph_0 \mid \tilde\aleph} \left[ M_{1/4}\bigl(d,n/b,\aleph_0\bigr) \right] \right] \right\} +\gamma \\ 
&= \operatorname*{sup}_{\tilde\aleph} \left\{ \mathbb E_{\aleph_0} \left[ M_{1/4}\bigl(d,n/b,\aleph_0\bigr) \right] \right\} +\gamma \\ &\le \operatorname*{sup}_{\tilde\aleph} \left\{ \operatorname*{sup}_{\aleph} \mathbb E_{\aleph} M_{1/4}\bigl(d,n/b,\aleph\bigr) \right\} +\gamma \\ 
&= \operatorname*{sup}_{\aleph} \mathbb E_{\aleph} M_{1/4}\bigl(d,n/b,\aleph\bigr) +\gamma, 
\end{aligned} 
\] 
where the inner supremum in the second-to-last line and the last supremum are
taken over all non-negative random variables \(\aleph\) taking finitely many
values and satisfying \[ \mathbb E\aleph \le C_d\left(a b^{-1/2}+\tau^{1/(8d)}\right). \] Letting \(\gamma\to0\), we obtain the recurrence \eqref{4} with the constraint \eqref{Etaleph}.

\subsection{Solving the Recurrence}

Take \(\varepsilon=1/n\). Write
$$
F(n,a):=M_{1/4}(d,n,a).
$$
From the previous subsection, we have

\begin{equation}
\label{Pn}
F(n,a)
\le
P(n)+C b^{1/4}
+
b^{1/4} \cdot
\sup_{\aleph}\ \mathbb E_\aleph
F\bigl( n/b,\aleph\bigr),
\end{equation}
where
\[
P(n):=
\tau^{-1/4}
\left(d\log(1/\tau)\log n\right)^{O(d)}+3,
\]
and the supremum is taken over all non-negative random variables $\aleph$ taking only finitely many values and satisfying
\[
\mathbb E\aleph
\le
C_d\left(a b^{-1/2}+\tau^{1/(8d)}\right).
\]

We now solve this recurrence. Let
\[
A(n):=(K\log n)^{-C\,d}
\]
be the small-\(\alpha\) stopping scale, chosen so that
\[
a\le A(n)
\quad\Longrightarrow\quad
F(n,a)\le a.
\]
Choose \(b=b_d\) to be a sufficiently large constant {\bf depending only on} $d$, and choose \(\tau=\tau(n)\) by
\begin{equation}
\label{rootProblem}
\tau^{1/(8d)}=A(n)b^{-1/2}.
\end{equation}

Then
\[
b^{1/4}\mathbb E\aleph
\le
C_d\left(a b^{-1/4}+A(n)b^{-1/4}\right).
\]
In the recursive regime \(a>A(n)\), this becomes
\[
b^{1/4}\mathbb E\aleph
\le
2C_d a b^{-1/4}.
\]

We claim that for \(M=M(d)\) sufficiently large,
$$
F(n,a)\le a\Phi(n),
\qquad
\Phi(n):=(\log(en))^M.
$$
We prove this by induction on \(n\). The case \(a\le A(n)\) follows from the stopping
lemma. Hence assume \(a>A(n)\). By the induction hypothesis,
\[
b^{1/4} \cdot \sup_{\aleph} \left\{\mathbb E_\aleph
F\bigl(n/b,\aleph\bigr) \right\}
\le
\sup_{\aleph} \{ b^{1/4} \cdot \mathbb E\aleph\} \, \cdot
\Phi\bigl(n/b \bigr)
\le
2C_d a b^{-1/4} \cdot
\Phi\bigl(n/b \bigr)
.
\]
Thus
\begin{equation} \label{Recurrence for smaller moment}
F(n,a)
\le
P(n)+C b^{1/4}
+
2C_d a b^{-1/4}
\Phi\bigl( n/b \bigr).
\end{equation}
Since \(b=b_d\) is fixed, choosing it sufficiently large gives
\begin{equation} \label{contraction effect}
2C_d b^{-1/4}\le \frac13.
\end{equation}
Also
\[
\Phi\bigl( n/b \bigr)\le \Phi(n).
\]
We get
\[
2C_d b^{-1/4}
\Phi\bigl( n/b \bigr)
\le
\frac13 \Phi(n).
\]
Therefore the recursive term is bounded by
\[
\frac13 a\Phi(n).
\]

It remains to control the first two terms. First, $C b^{1/4}$ is a constant, which can be made less than $a/3 \cdot \Phi(n)$. For $P$, since we have \eqref{rootProblem} 
\[
\tau^{1/(8d)}=A(n)b^{-1/2},
\]
we have
\[
\tau^{-1}
=
A(n)^{-8d}b^{4d}= (\log n)^{Cd\cdot 8d} b^{4d}\,.
\]
Since \(b\) is fixed depending only on $d$, and \(A(n)^{-1}\) is a $Cd$ power of \(\log n\), it follows that
\[
P(n)=\tau^{-1/4}
\left(d\log(1/\tau)\log n\right)^{O(d)}+3 \le (\log(en))^{\tilde{C}_d} \le (\log(en))^{C\, d^2} \,.
\]
Choosing \(M=M(d)=C_1 d^2 +c_2 d\) in the definition of $\Phi$ sufficiently large gives
\[
P(n)
\le (\log(en))^{C\, d^2}
\le \frac13 A(n)\Phi(n).
\]
Since \(a>A(n)\), we obtain
\[
P(n)
\le
\frac13 a\Phi(n).
\]
Combining the estimates,
\[
F(n,a)
\le
\frac13 a\Phi(n)+\frac13 a\Phi(n)+\frac13 a\Phi(n)
=
a\Phi(n).
\]
This closes the induction.

Taking \(a=1\), we conclude that every degree-\(d\) PTF satisfies
\[
\mathbb E_x [s_f(x)^{1/4}]
\le
(\log(en))^{C_1d^2+C_2 d}\,.
\]

\begin{rem}
This is where one acquires $Cd^2$ exponent over $\log n$. But Kane \cite{Correct_exponent_for_AS} does not acquire it. The reason that Kane 14 does
not acquire $Cd^2$  exponent over $\log n$ is in the fact that in the last display formula before Section 5 of Kane \cite{Correct_exponent_for_AS}, the choice of $b$ is drastically different from our choice of $b$.
We choose $b$ depending only on $d$. The drastically different choice of $b$ in Kane 14 is $b=n^{c/d}$. Of course we cannot afford that for the logarithmic estimate, because our estimate 
\eqref{Pn} has
$b^{1/4}$ in the right hand side, so the largest $b$ we can afford is also polylogarithmic in $n$.
\end{rem}

\begin{rem}
The choice of the stopping scale is one of the places where the power
$Cd^2$ enters. If one can manage to choose $A(n)$ better than our choice,
then one may be able to improve this power. However, the stopping scale
$\alpha(p)\lesssim (\log n)^{-d}$ seems to be rather natural from the
standard argument.

Indeed, let $f=\operatorname{sgn}(p)$. In order to stop the recursion for
the $1/4$-moment, it is enough to make
\[
M_{1/4}[f]
=
\mathbb{E}_{x}\bigl[s_f(x)^{1/4}\bigr]
\]
small. Since $s_f(x)$ is integer-valued, we have
\[
s_f(x)^{1/4}\leq s_f(x),
\]
and hence
\[
M_{1/4}[f]
\leq
\mathbb{E}_{x}[s_f(x)]
=
\operatorname{AS}[f].
\]
On the other hand,
\[
\operatorname{AS}[f]
\lesssim
n \cdot \Pr\{f \text{ takes its less common value}\}.
\]
Thus the stopping argument naturally reduces to showing that the less
common value of $f$ occurs with very small probability.

Writing $\mu=\mathbb{E}p$, and using explicitly that
$f=\operatorname{sgn}(p)$, we have
\[
\Pr\{f \text{ takes its less common value}\}
\leq
\Pr\{|p-\mu|>|\mu|\}.
\]
The standard argument used in Kane's stopping lemma gives
\[
\frac{|\mu|}{\|p-\mu\|_2}
\gtrsim
2^{-O(d)}\alpha(p)^{-1/2}.
\]
Equivalently,
\[
|\mu|
=
\frac{|\mu|}{\|p-\mu\|_2}\,\|p-\mu\|_2
\gtrsim
2^{-O(d)}\alpha(p)^{-1/2}\|p-\mu\|_2.
\]
Therefore, by the degree-$d$ concentration inequality,
\[
\begin{aligned}
\Pr\{|p-\mu|>|\mu|\}
&\leq
\Pr\left\{
|p-\mu|
\gtrsim
2^{-O(d)}\alpha(p)^{-1/2}\|p-\mu\|_2
\right\} \\
&\lesssim
\exp\left(
-c\left(2^{-O(d)}\alpha(p)^{-1/2}\right)^{2/d}
\right) \\
&=
\exp\left(
-c\,2^{-O(1)}\alpha(p)^{-1/d}
\right).
\end{aligned}
\]
To make this probability small enough for the stopping argument, one is
therefore naturally led to the condition
\[
\alpha(p)^{-1/d}\gtrsim \log n,
\]
or equivalently
\[
\alpha(p)\lesssim (\log n)^{-d}.
\]
This explains why Kane's stopping scale
\[
\alpha(p)\leq (K\log n)^{-d}
\]
is natural. Consequently, improving the order of the final exponent by
choosing a substantially better $A(n)$ would likely require a new input,
rather than only a more careful optimization of this stopping argument.
\end{rem}

\subsection{What  Does Not Work for the $1/2$-moment in this approach}
Let us see why we cannot obtain a polylog bound for BSA using Kane-type direct proof. Establishing a polylog 
bound means that $\Phi(n)=\text{polylog}(n)$. We try to find $\Phi=\Phi(n)$, $b=b(n),$ and other parameters to complete the induction.

The same argument as above, combined with the \(C b^{1/2}\)-error estimate in
Remark~\ref{Block averaging is sharp}, gives the recurrence
(compare with \eqref{Recurrence for smaller moment})
\[
F(n,a)
\le
P(n)+C b^{1/2}
+
b^{1/2}
\mathbb E_\aleph
F\bigl( n/b,\aleph\bigr),
\]
where
\[
P(n):=
\tau^{-1/2}
\left(d\log(1/\tau)\log n\right)^{O(d)}+3,
\]
The same type of $A(n)$ can be used to get (in the regime $a\ge A(n)$):
$$
F(n,a)
\le
P(n) + Cb^{1/2}
+
2C_d a
\Phi\bigl( n/b \bigr)
$$
What we want to show is
\begin{equation} \label{Recurrence for BSA}
P(n)+b^{1/2}
+
2C_d \cdot a \cdot {\color{red} b^{0}} \cdot
\Phi\bigl( n/b \bigr)
\le a \Phi(n).
\end{equation}
The key is that in the recursion term, there is no decay in $b$.

\medskip

If the above is true, then we must have
$$
2C_d
\Phi\bigl( n/b \bigr) \le \Phi(n).
$$
We rewrite it as
$$
\frac{\Phi(n)}{\Phi(\frac{n}{b})} \ge 2C_d \gg 1.
$$
This inequality imposes a restriction on how fast the upper bound grows. Suppose $\Phi(n) = C'_d (\log n)^{K_d}$. Then the left-hand side gives
$$
\frac{\Phi(n)}{\Phi(\frac{n}{b})}
=
\frac{C'_d (\log n)^{K_d}}{C'_d (\log n-\log b)^{K_d}} 
=
\frac{1}{\left(1-\frac{\log b}{\log n}\right)^{K_d}}
\ge 2C_d
$$
Since $C_d$ is a constant, This implies that
$$
\log b \ge c \log n
$$
for some small $c>0$, which implies that $b \ge n^c$. However, this contradicts \eqref{Recurrence for BSA} since it also implies
$$
n^{c/2} \le b^{1/2} \le a \Phi(n) \le \text{polylog}(n). 
$$

\begin{remark} 
One may hope to obtain a polylogarithmic upper bound for BSA by improving the error term in the analogous block-averaging estimate for the \(1/2\)-moment from \(C b^{1/2}\) to \(\operatorname{polylog}(b)\). If such an improvement were available, then \eqref{Recurrence for BSA} would be replaced by \[ P(n)+\operatorname{polylog}(b) + 2C_d \cdot a \cdot \Phi\bigl( n/b \bigr) \le a \Phi(n). \] In that case, the choice \(b=n^c\) would become affordable, and the above recurrence would be consistent with a polylogarithmic choice of \(\Phi(n)\). However, such an improvement is impossible in general. Indeed, the counterexample in Remark~\ref{Block averaging is sharp}, applied to the \(1/2\)-moment version of the same block-averaging argument, shows that the \(b^{1/2}\)-error is sharp in the BSA case. \end{remark}

\subsection{Phase transition for the estimate of the moments of the sensitivity of polynomial threshold function}
\label{PhaseTr-s}
Let $f\in PTF_d$ and let $s_f$ be its sensitivity function,  then
\begin{equation}
\label{PhaseTr}
\mathbb E[s_f(x)^\eta] \le \begin{cases} C(d) (\log n)^{C\,d^2}, \,\, \eta \in (0, 1/2],
\\
C(d) \, n^{\eta-1/2} \,(\log n)^{C\,d\, \log d}, \,\, \eta \in (1/2, 1]\,.
\end{cases}
\end{equation}

This can be derived easily from \cite{BSA_of_PTF}. The proof in Section \ref{balance} also gives exactly that (it is basically the same proof but with balanced partition).

But we call the attention of the reader to a strange fact, namely, that the proof by recursion provided in Section \ref{lowerMoments}  also gives \eqref{PhaseTr} with the exception of the case $\eta=1/2$.

\medskip

This looks strange and mysterious. The proofs are rather close in spirit after all.

\bigskip

{\bf Acknowledgement.} 
The authors acknowledge the use of OpenAI's ChatGPT (GPT-5.5 Thinking) as an auxiliary tool during the preparation of this work. In several instances, after the authors had developed mathematical ideas or proof strategies and verified them by hand at a preliminary level, the tool was used to generate draft arguments for comparison, refinement, and clarification. It was also used for mathematical discussion, assistance with calculations, notation, and exposition. All mathematical statements and proofs were independently checked by the authors, who take full responsibility for the content.

\bibliographystyle{plain}
\bibliography{mybibliography}

\end{document}